\documentclass[12pt]{amsart}

\usepackage{tikz-cd}

\usepackage[english]{babel}
\usepackage[utf8x]{inputenc}
\usepackage[T1]{fontenc}
\usepackage{amsmath}
\usepackage{amstext}
\usepackage{amsfonts}
\usepackage{amssymb}
\usepackage{amsthm}

\usepackage[alphabetic, initials]{amsrefs}
\usepackage{mathtools}
\usepackage{dsfont}
\usepackage{color}
\usepackage{graphicx}
\usepackage[colorinlistoftodos]{todonotes}
\usepackage[colorlinks, linkcolor=red, citecolor=blue, urlcolor=blue, hypertexnames=true]{hyperref}

\usepackage{float}
\usepackage[colorinlistoftodos]{todonotes}
\usepackage{theoremref}
\usepackage{enumitem}
\usepackage{todonotes}

%\usepackage{fancyhdr} % Custom headers and footers
%\pagestyle{plain} % Makes all pages in the document conform to the custom headers and footers
%\fancyhead[L]{}% Empty left header
%\fancyhead[C]{\thepage} % Page numbering for center header  
%\fancyhead[R]{}% Empty right header
%\fancyfoot[L]{}% Empty left footer
%\fancyfoot[C]{}% Empty center footer
%\fancyfoot[R]{}% Empty left footer
\allowdisplaybreaks
\newtheorem{theorem}{Theorem}[section]

\newtheorem{prop}[theorem]{Proposition}
\newtheorem{cor}[theorem]{Corollary}
\newtheorem{thm}[theorem]{Theorem}

\newtheorem{question}[theorem]{Question}
\newtheorem*{cor*}{Corollary}
\newtheorem*{conjecture*}{Conjecture}
\newtheorem*{thm*}{Theorem}
\newtheorem*{lem*}{Lemma}

\newtheorem*{prop*}{Proposition}
\theoremstyle{definition}

\newtheorem*{defn*}{Definition}

\theoremstyle{remark}

%probably the order matters for this macros. 

%%%%%%%%%%%%%%%

\setlength{\textwidth}{15cm} \setlength{\textheight}{21cm}
\setlength{\oddsidemargin}{0.5cm} \setlength{\topmargin}{0cm}
\setlength{\evensidemargin}{0.5cm} \setlength{\topmargin}{0cm}

%%%%%%%%%%%%%%%

\title[On invariant subalgebras when the ISR property fails]{On invariant  subalgebras when the ISR property fails}

\author{Yongle Jiang*}

\address{Yongle Jiang, School of Mathematical Sciences, Dalian University of Technology,
Dalian, 116024, China}
\email{yonglejiang@dlut.edu.cn}

\author{Ruoyu Liu}
\address{Ruoyu Liu, School of Mathematical Sciences, Dalian University of Technology,
Dalian, 116024, China}
\address{Current Address: Dalian Jinpu New District Gaochengshan Middle School, Dalian, 
116100, China}

\email{liuruoyu0925@163.com}

\thanks{$\ast$-Corresponding author}

\date{\today}

\begin{document}

\begin{abstract}
We classify all $G$-invariant von Neumann subalgebras in $L(G)$ for $G=\mathbb{Z}^2\rtimes SL_2(\mathbb{Z})$. This is the first result on classifying $G$-invariant von Neumann subalgebras in $L(G)$ for i.c.c. groups $G$ without the invariant von Neumann subalgebras rigidity property (ISR property for short) as introduced in Amrutam-Jiang's work. As a corollary, we show that $L(\mathbb{Z}^2\rtimes \{\pm I_2\})$ is the unique maximal Haagerup $G$-invariant von Neumann subalgebra in $L(G)$, where $I_2$ denotes the identity matrix in $SL_2(\mathbb{Z})$.
\end{abstract}

\subjclass[2020]{Primary 46L10; Secondary 20F67, 47C15}

\keywords{invariant von Neumann subalgebras, Haagerup radical, amenable radical}

\maketitle

%\tableofcontents

\section{introduction}

Let $G$ be a countable discrete group and $L(G)$ be the corresponding group von Neumann algebra. Note that $G$ acts on $L(G)$ naturally by conjugation.
After the initial work by Alekseev-Brugger \cite{ab}, the problem of classifying $G$-invariant von Neumann subalgebras in $L(G)$, i.e., those which are globally invariant as a subset under the $G$-conjugation action, has quickly received  quite much attention recently \cites{cd,kp,aj,cds,aho,jz}. There are at least two potential applications behind this line of research. One is that every $G$-invariant von Neumann subalgebra in $L(G)$ is clearly regular, i.e., its normalizers generate the whole ambient von Neumann algebra $L(G)$. Thus, maximal abelian $G$-invariant von Neumann subalgebras are automatically Cartan subalgebras in $L(G)$. Hence the study of classifying $G$-invariant von Neumann subalgebras in $L(G)$ may shed light on the famous open question on the existence of Cartan subalgebras in $L(G)$ for certain classes of groups \cite[Problem V]{ioana_icm}, e.g., the class of icc groups with positive first $L^2$-Betti numbers \cite[Corollary 4.4]{aj} or lattices in higher rank simple Lie groups, e.g., $G=SL_3(\mathbb{Z})$ \cite[Corollary 2.8]{kp}. The other one comes from the problem of classifying the so-called invariant random von Neumann subalgebras, a new concept as introduced in \cite{aho}. This notion may be considered as the von Neumann algebra counterpart for the quite hot topic of invariant random subgroups \cites{agv,gel}. In fact, it is 
not hard to see, and has been partially indicated in \cite{aho,jz} that the concept of invariant random von Neumann subalgebras serves as a middle link between  invariant random subgroups and characters on groups \cite{bd}. Moreover, invariant von Neumann subalgebras are the simplest, i.e., the Dirac type invariant random von Neumann subalgebras.

For the above classification problem, an extreme situation is that every $G$-invariant von Neumann subalgebra $P$ in $L(G)$ satisfies that $P=L(H)$ for some normal subgroup $H\lhd G$. Once this happens, we say that $G$ has the invariant von Neumann subalgebras rigidity property (ISR property, for short) following Amrutam-Jiang in \cite{aj}. It is clear that by Pontryagin duality, infinite abelian groups do not have the ISR property. In fact, it was proved in \cite[Proposition 3.1]{aj} that if an infinite group $G$ is not icc (i.e., does not satisfy the infinite-conjugacy-class condition), then $G$ does not have the ISR property. On the other hand, many icc groups with trivial amenable radical are known to have the ISR property, including irreducible lattices in higher rank simple Lie groups \cite{kp}, 
 non-abelian free groups and a finite direct sum of them \cite{aj}, all acylindrically hyperbolic groups with trivial amenable radical  \cite{cds}, etc.  Recently, the (amenable) finitary permutation group
 $S_{\mathbb{N}}$ was proved to have this ISR property in \cite{jz}. Besides this extreme situation, it seems nothing is known on classifying invariant von Neumann subalgebras inside non-abelian ambient von Neumann algebras. Therefore,
it is natural to ask whether we can classify all $G$-invariant von Neumann subalgebras in $L(G)$ if $G$ does not have the ISR property.

Recall that in \cite[Example 3.5]{aj}, Amrutam and the first named author presented an example showing that i.c.c. condition is not yet sufficient for deducing the ISR property. More precisely, it was proved that for $G=\mathbb{Z}^2\rtimes SL_2(\mathbb{Z})$, there is a $G$-invariant von Neumann subalgebra $M\subsetneq L(\mathbb{Z}^2)$ such that $M$ is not equal to $L(H)$ for any normal subgroup $H$ in $G$. This ambient group $G$ and the associated action $SL_2(\mathbb{Z})\curvearrowright \mathbb{T}^2=\widehat{\mathbb{Z}^2}$ have played prominent roles in the modern development of von Neumann algebras and measurable group theory. In \cite{popa_betti}, Popa proved that $L(G)$ has trivial fundamental group, hence solving a long standing problem proposed by R. V. Kadison. This was also one of the early achievements of Popa's highly influential deformation/rigidity technique, see \cites{popa_icm, ioana_icm, vaes_icm} for an overview. Ozawa proved that $L(G)$ is solid \cite{ozawa_hokkaido} and hence it is prime in the sense that it can not be decomposed as a tensor product of two II$_1$ factors. In \cite{ioana_subequivalence}, Ioana established an alternative principle for all ergodic subequivalence relations inside the equivalence relation defined by $SL_2(\mathbb{Z})\curvearrowright \mathbb{T}^2$. He also  
proved that $L(G)$ has a unique group measure space Cartan subalgebra in \cite{ioana_gafa}. Shortly, this result was improved to be a unique Cartan by Popa and Vaes in \cite{pv_crelle}.  Recently, as a typical case of studying the notion of maximal Haagerup property as initiated in \cite{jiangskalski}, the first named author showed that $L(SL_2(\mathbb{Z}))$ is a maximal Haagerup von Neumann subalgebra in $L(G)$. For more recent study of the maximal Haagerup aspects and generalizations of the above results, see \cites{val, jv}.

Concerning the importance of studying various structure properties of $L(\mathbb{Z}^2\rtimes SL_2(\mathbb{Z}))$, it is natural to wonder whether we can classify all invariant von Neumann subalgebras in it. In this paper, we answer this question positively.

\begin{thm}\label{thm}
Let $G=\mathbb{Z}^2\rtimes SL_2(\mathbb{Z})$. Then a von Neumann subalgebra $P\subseteq L(G)$ is $G$-invariant if and only if either $P=L(H)$ for some normal subgroup $H\subseteq G$ or $P=A_n$ for some $n\geq 0$, where $A_n:=\{x\in L(n\mathbb{Z}^2):~\tau(xu_g)=\tau(xu_{g^{-1}}), ~\forall~g\in G\}$, where $\tau$ denotes the canonical trace on $L(G)$ defined by $\tau(x)=\langle x\delta_e, \delta_e\rangle$ for any $x\in L(G)\subseteq B(\ell^2(G))$. 
\end{thm}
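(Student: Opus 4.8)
The plan is to split the argument into a reduction to the abelian ``core'' $L(\mathbb{Z}^2)\cong L^{\infty}(\mathbb{T}^2)$, a classification of the invariant subalgebras of that core, and a final dichotomy; the easy ``if'' direction (that each $L(H)$ and each $A_n$ is $G$-invariant) I would dispatch by direct verification, the only nonobvious point being that the symmetry condition defining $A_n$ together with the $SL_2(\mathbb{Z})$-invariance of $n\mathbb{Z}^2$ makes $A_n$ stable under the conjugation action. Write $L(G)=L(\mathbb{Z}^2)\rtimes SL_2(\mathbb{Z})$ and let $E\colon L(G)\to L(\mathbb{Z}^2)$ be the canonical trace-preserving conditional expectation, so $E(x)=x_I$ in the Fourier expansion $x=\sum_{g\in SL_2(\mathbb{Z})}x_gu_g$ with $x_g\in L(\mathbb{Z}^2)$. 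Since $\mathbb{Z}^2$ is amenable and $L(\mathbb{Z}^2)$ is Cartan (hence maximal abelian) in $L(G)$, the F\o lner averages $\frac{1}{|F|}\sum_{v\in F}u_vxu_v^{*}$ converge in $\|\cdot\|_2$ to the expectation onto $(L(\mathbb{Z}^2))'\cap L(G)=L(\mathbb{Z}^2)$, i.e.\ to $E(x)$. As these averages lie in a $G$-invariant $P$ and $P$ is weakly closed, we obtain $E(P)=P\cap L(\mathbb{Z}^2)=:Q$, an $SL_2(\mathbb{Z})$-invariant von Neumann subalgebra of $L^{\infty}(\mathbb{T}^2)$.

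The next step is to classify these cores: I claim the $SL_2(\mathbb{Z})$-invariant subalgebras $Q\subseteq L^{\infty}(\mathbb{T}^2)$ are exactly $L(n\mathbb{Z}^2)$ and $A_n$. Such a $Q$ is a factor of the algebraic system $SL_2(\mathbb{Z})\curvearrowright\mathbb{T}^2$, equivalently an invariant sub-$\sigma$-algebra. Using the orbit structure of $SL_2(\mathbb{Z})$ on $\mathbb{Z}^2\setminus\{0\}$ (orbits are the vectors of a fixed gcd, i.e.\ $d\cdot(\text{primitive})$) together with the $SL_2(\mathbb{Z})$-equivariance and bimodularity of the expectation $E_Q$, one reads off that $E_Q$ can only keep a character $u_w$, symmetrize it to $\tfrac12(u_w+u_{-w})$, or kill it; these are precisely the behaviours of $E_{L(n\mathbb{Z}^2)}$ and of $E_{A_n}$ (the latter being averaging over the central flip $-I$). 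Multiplicativity of $Q$ then forces a single global pattern, the symmetric option producing exactly the $A_n$. The genuinely rigid input, needed to exclude exotic non-algebraic factors, is the strong ergodicity/subequivalence-relation rigidity of $SL_2(\mathbb{Z})\curvearrowright\mathbb{T}^2$ (Ioana's alternative principle), which guarantees every invariant sub-$\sigma$-algebra is of algebraic origin.

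Finally I would run the dichotomy. If $P\subseteq L(\mathbb{Z}^2)$, then $P=Q$ and we are done by the core classification. If $P\not\subseteq L(\mathbb{Z}^2)$, so that some $x\in P$ has a nonzero coefficient $x_{g_0}$ with $g_0\neq I$, I would first argue that multiplying such elements against $Q$ and against each other produces non-symmetric corners in $P\cap L(\mathbb{Z}^2)$, which an algebra of the form $A_m$ cannot absorb; hence necessarily $Q=L(n\mathbb{Z}^2)$ for some $n$. Next, for each $g$ in the support $S=\{g:E(Pu_g^{*})\neq 0\}$, the corner $E(Pu_g^{*})\subseteq Q$ is, by conjugation by $\mathbb{Z}^2$, invariant under multiplication by the characters $u_{(I-g)v}$; combined with ergodicity of the $g$-dynamics this upgrades the normalizing corner in the $g$-direction to an honest group unitary $u_{(v,g)}\in P$. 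The set $H$ of group elements arising this way is then shown to be a subgroup with $L(H)\subseteq P$, the reverse inclusion following because $E(Pu_g^{*})\subseteq L(n\mathbb{Z}^2)$ is spanned by characters; normality of $H$ is immediate from the $G$-invariance of $P$.

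The two steps carrying the real weight are the core classification and this upgrade step. In the former, the difficulty is to exclude exotic invariant sub-$\sigma$-algebras beyond the algebraic families and to track carefully how the central flip $-I$ manufactures the subalgebras $A_n$. In the latter, the most delicate point is the parabolic elements $g$ with $\mathrm{tr}(g)=2$: there $\det(I-g)=0$, so the $\mathbb{Z}^2$-averaging does \emph{not} annihilate the $g$-component (in contrast to the elliptic, hyperbolic, and $-I$ cases, where $I-g$ is invertible over $\mathbb{Q}$ and $(I-g)\mathbb{Z}^2$ is finite index), and these components must be analyzed directly to ensure that the support $H$ closes up into a genuine normal subgroup of $G$.
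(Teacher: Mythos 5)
Your reduction to the core via F\o lner averaging over $\mathbb{Z}^2$ is sound (since $L(\mathbb{Z}^2)$ is a masa, the averages do converge to the expectation, and $G$-invariance plus weak closedness gives $E(P)=P\cap L(\mathbb{Z}^2)$), and this matches the paper's use of the masa property. But your core classification rests on the wrong rigidity input: Ioana's alternative principle classifies ergodic \emph{subequivalence relations} of $SL_2(\mathbb{Z})\curvearrowright\mathbb{T}^2$, whereas an invariant von Neumann subalgebra of $L^{\infty}(\mathbb{T}^2)$ corresponds to an invariant sub-$\sigma$-algebra, i.e.\ a \emph{quotient} of the action --- a different object, to which that theorem does not apply. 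Moreover your proposed mechanism, that $E_Q$ must keep, symmetrize, or kill each character $u_w$, is unjustified: $E_Q$ is $SL_2(\mathbb{Z})$-equivariant but not equivariant under torus translations, so it has no a priori reason to respect the character basis; that it nevertheless behaves this way is essentially the content of the theorem you need. The correct citation, and the one the paper uses, is Witte's classification of measurable quotients (Example 5.9 of that paper, correcting Park), which yields exactly $\mathbb{C}$, $L(n\mathbb{Z}^2)$ and $A_n$. This gap is repairable by swapping the reference.

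The gap that is not repairable as written is your upgrade step in the case $P\not\subseteq L(\mathbb{Z}^2)$. Passing from a nonzero $\mathbb{Z}^2$-invariant coefficient space $E(Pu_g^{*})$ to an honest unitary $u_{(v,g)}\in P$ via ``ergodicity of the $g$-dynamics'' is precisely the hard content of Chifan--Das--Sun's Theorem 5.1, which the paper invokes rather than reproves (using bi-exactness of $G$, via Proposition \ref{prop: G as in chifan-das-sun}); you give no actual argument for it, and, as you yourself flag, it fails outright for parabolic $g$, where $(I-g)\mathbb{Z}^2$ has rank one and a single parabolic acts non-ergodically on $\mathbb{T}^2$. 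Note also that the problematic parabolic supports occur already for \emph{amenable} $P$ (the subgroups $\mathbb{Z}^2\rtimes\langle g\rangle$ are solvable), so your elementary averaging cannot exclude them. The paper avoids the whole issue by a two-step reduction: Chifan--Das--Sun handles non-amenable $P$ (giving $P=L(H)$ directly), and Amrutam--Hartman--Oppelmayer's Theorem A confines amenable $P$ to $L(G_a)=L(\mathbb{Z}^2\rtimes\{\pm I_2\})$, so the only group direction ever requiring hands-on Fourier analysis is $s=-I_2$, for which $\det(I-s)=4\neq 0$ and your averaging heuristic is valid. Even there, your assertion that ``non-symmetric corners cannot be absorbed by $A_m$'' is stated but not performed: in the paper this is the longest and most delicate part of the proof (the subcases $P\cap L(\mathbb{Z}^2)=A_n$, where one must derive $E(e_1s)=0$ from the bimodule identity $E(e_1s)E(f_1s)=E(e_1sE(f_1s))$, including a genuine contradiction argument when $n=1$). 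Without the two external theorems, or a worked-out substitute covering parabolic and hyperbolic supports, the proposal does not close.
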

In \cite{aho}, the authors proved the striking result that for any countable discrete group $G$, $L(G_a)$ is the maximal amenable $G$-invariant von Neumann subalgebra in $L(G)$, where  $G_a$ denotes the amenable radical in $G$, i.e., the unique maximal amenable normal subgroup in $G$.
As a natural generalization of amenable radicals for groups, the notion of Haagerup radical was considered in \cite{jiangskalski}. However, it is still unclear whether every countable group admits the Haagerup radical (see e.g., \cite[Question 2.11]{jiangskalski}), although this radical does exist for several classes of groups, see e.g., \cite[Proposition 2.10]{jiangskalski} and \cite[Proposition 4.1]{val}. Similarly, as suggested by Amrutam, one may also consider the notion of maximal Haagerup $G$-invariant von Neumann subalgebras in $L(G)$. Although the existence of such subalgebras in $L(G)$ (for $G$ without the Haagerup property) is still unclear in general, we deduce the following parallel result to \cite[Theorem A]{aho} but for a particular ambient group. To the best knowledge of the authors, this is the first known result along this direction. 
\begin{cor}\label{cor}
Let $G=\mathbb{Z}^2\rtimes SL_2(\mathbb{Z})$. Then $L(\mathbb{Z}^2\rtimes \{\pm I_2\})$ is the unique maximal Haagerup $G$-invariant von Neumann subalgebra in $L(G)$, where $I_2$ denotes the identity matrix in $SL_2(\mathbb{Z})$.
\end{cor}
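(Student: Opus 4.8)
The plan is to deduce Corollary~\ref{cor} directly from the classification in Theorem~\ref{thm}. Writing $N:=\mathbb{Z}^2\rtimes\{\pm I_2\}$, the first task is to verify that the candidate subalgebra $L(N)$ is itself a Haagerup $G$-invariant von Neumann subalgebra of $L(G)$. Invariance is immediate since $N\lhd G$, so $L(N)$ is one of the subalgebras $L(H)$ appearing in Theorem~\ref{thm}. For the Haagerup property, I would note that $N=\mathbb{Z}^2\rtimes\{\pm I_2\}$ is virtually $\mathbb{Z}^2$, hence amenable, and that $L(N)$ of an amenable group is a hyperfinite (amenable) von Neumann algebra, which has the Haagerup property.

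The heart of the argument is maximality and uniqueness, and here I would invoke the full force of Theorem~\ref{thm}: any $G$-invariant von Neumann subalgebra $P$ is either $L(H)$ for a normal subgroup $H\lhd G$, or one of the exceptional algebras $A_n$ with $n\ge 0$. So I would run through both families. For the $A_n$ family, each $A_n\subseteq L(n\mathbb{Z}^2)$ is abelian, hence certainly has the Haagerup property, but one checks $A_n\subseteq L(\mathbb{Z}^2)\subseteq L(N)$, so these can never strictly contain $L(N)$ and pose no threat to maximality. For the normal-subgroup family, the key structural fact is that the normal subgroups $H\lhd G=\mathbb{Z}^2\rtimes SL_2(\mathbb{Z})$ are highly constrained: because $SL_2(\mathbb{Z})$ acts on $\mathbb{Z}^2$ with no nontrivial invariant subgroups of finite index behaving well, every normal subgroup is comparable to $\mathbb{Z}^2$ or contains a finite-index subgroup of $SL_2(\mathbb{Z})$. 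Thus a normal subgroup $H$ is either contained in $\mathbb{Z}^2$ (giving $L(H)\subseteq L(\mathbb{Z}^2)\subseteq L(N)$, which is Haagerup), equal to $N$ itself, or large enough that $H$ surjects onto a finite-index subgroup of $SL_2(\mathbb{Z})$.

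The decisive point is then that any normal subgroup $H$ strictly containing $N$ must contain a nonamenable subgroup, namely a finite-index (hence nonamenable, since $SL_2(\mathbb{Z})$ contains a free group) subgroup of $SL_2(\mathbb{Z})$; consequently $L(H)$ contains a copy of $L(\mathbb{F}_2)$ as a subfactor arising from a nonamenable subgroup, and such a von Neumann algebra fails the Haagerup property. I would make this precise by recalling that the Haagerup property passes to von Neumann subalgebras of the form $L(K)$ for subgroups $K\le H$, so $L(H)$ Haagerup would force the relevant copy of $SL_2(\mathbb{Z})$-part to have Haagerup property, contradicting that $SL_2(\mathbb{Z})$ has property~(T)-like rigidity (more simply, contains $\mathbb{F}_2$, and $L(\mathbb{F}_2)$ fails Haagerup is false—so I instead use that a nonamenable group with relative property (T) pairs, e.g. $\mathbb{Z}^2\rtimes\Gamma$ with $\Gamma$ finite-index in $SL_2(\mathbb{Z})$, fails Haagerup by the Burger--Shalom relative property (T) of $(\mathbb{Z}^2\rtimes SL_2(\mathbb{Z}),\mathbb{Z}^2)$). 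This shows $L(N)$ is maximal among Haagerup $G$-invariant subalgebras. Uniqueness follows because any maximal Haagerup $G$-invariant subalgebra is of one of the two types classified, is Haagerup, and by the above dichotomy must be contained in $L(N)$, hence equals $L(N)$ by maximality.

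\emph{The main obstacle} I anticipate is the careful bookkeeping of normal subgroups $H\lhd G$ and the verification that $L(H)$ fails the Haagerup property precisely when $H$ is not contained in $N$; the delicate case is ruling out intermediate normal subgroups between $\mathbb{Z}^2$ and $G$ whose quotient part is an infinite amenable or Haagerup subgroup of $SL_2(\mathbb{Z})$, which is resolved by the relative property~(T) of the pair $(\mathbb{Z}^2\rtimes SL_2(\mathbb{Z}),\mathbb{Z}^2)$ together with the classification of subgroups of $SL_2(\mathbb{Z})$ that act without nonzero invariant vectors on $\mathbb{Z}^2$.
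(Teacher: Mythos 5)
Your overall route is the paper's: apply Theorem~\ref{thm}, observe that each $A_n\subseteq L(\mathbb{Z}^2)\subseteq L(N)$ where $N=\mathbb{Z}^2\rtimes\{\pm I_2\}$, and reduce everything to showing that a normal subgroup $H\lhd G$ with $L(H)$ Haagerup (equivalently, with $H$ Haagerup, by Choda's theorem, which both you and the paper use implicitly) must be contained in $N$; the paper disposes of this last step in one line by citing \cite[Proposition 2.10]{jiangskalski}, which identifies $N$ as the Haagerup radical of $G$. Where you diverge is in attempting to reprove this group-theoretic step by hand, and there your argument has a genuine gap. The structural claim that every normal subgroup of $G$ is ``comparable to $\mathbb{Z}^2$ or contains a finite-index subgroup of $SL_2(\mathbb{Z})$'' is false: since $PSL_2(\mathbb{Z})\cong \mathbb{Z}/2\ast\mathbb{Z}/3$ is virtually free, $SL_2(\mathbb{Z})$ has non-amenable normal subgroups of \emph{infinite} index (e.g.\ the preimage of the second derived subgroup of a finite-index free normal subgroup of $PSL_2(\mathbb{Z})$), and for such $Q$ the group $H=\mathbb{Z}^2\rtimes Q$ is normal in $G$, is not contained in $N$, yet contains no finite-index subgroup of $SL_2(\mathbb{Z})$. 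Consequently the relative property (T) pairs you invoke, $(\mathbb{Z}^2\rtimes\Gamma,\mathbb{Z}^2)$ with $\Gamma$ of finite index, do not cover all cases. Note also that relative property (T) of the ambient pair $(G,\mathbb{Z}^2)$ does not restrict to subpairs --- $(\mathbb{Z}^2\rtimes\Lambda,\mathbb{Z}^2)$ fails relative (T) whenever $\Lambda$ is amenable --- and the relevant criterion concerns invariant means/measures for the dual action on $\mathbb{T}^2$, not ``invariant vectors'' as in your closing remark.

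The repair is short, and is essentially the content of the cited \cite[Proposition 2.10]{jiangskalski}. If $H\lhd G$ and $H\not\subseteq N$, then the image $Q$ of $H$ in $SL_2(\mathbb{Z})$ is a normal subgroup not contained in $\{\pm I_2\}$, hence non-amenable (its intersection with a finite-index free subgroup is a nontrivial normal subgroup of a free group), and in particular $Q$ contains a hyperbolic element $q$. Lifting $q$ to $h=(v,q)\in H$ and computing commutators with $w\in\mathbb{Z}^2$ gives $[w,h]=((I_2-q)w,I_2)\in H$, and since $\det(I_2-q)=2-\mathrm{tr}(q)\neq 0$, the subgroup $H\cap\mathbb{Z}^2$ has finite index in $\mathbb{Z}^2$. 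Burger's criterion then yields relative property (T) for the pair $(H,\,H\cap\mathbb{Z}^2)$, because the conjugation action of $H$ on $H\cap\mathbb{Z}^2$ factors through the non-amenable group $Q$, so the dual action admits no invariant mean concentrated away from the fixed point; since $H\cap\mathbb{Z}^2$ is infinite, $H$ is not Haagerup. With this substituted for your trichotomy (or with the citation the paper uses), your proof is correct and agrees with the paper's. The mid-proof detour through ``$L(\mathbb{F}_2)$ fails Haagerup'' was rightly self-corrected --- non-amenability alone rules out nothing here, as free groups are Haagerup --- but it should be excised from a final write-up.
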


Next, let us comment on the proof of Theorem \ref{thm}.
For the proof, we combine \cite[Theorem 5.1]{cds} with \cite[Theorem A]{aho} to  reduce the study to the case $P\subseteq L(\mathbb{Z}^2\rtimes \{\pm I_2\})$. Then by applying techniques as introduced in \cite{aj}, a considerable amount of effort has to be devoted to showing that $P$ actually lies in $L(\mathbb{Z}^2)$ unless $P=L(d\mathbb{Z}^2\rtimes \{\pm I_2\})$, where $d\in\{1, 2\}$.
We remark that a similar strategy can also be applied to the wreath product group $\mathbb{Z}\wr F_2$, see Section \ref{section: last remark}.

This paper is organized as follows: after recalling  relevant techniques in Section \ref{section: preliminaries}, we present the proof of Theorem \ref{thm} and Corollary \ref{cor} in Section \ref{section: proof}. Finally, in Section \ref{section: last remark}, we collect some remarks and open questions related to this work. 

In this paper, we usually use $\tau$ to mean a trace on a finite von Neumann algebra, e.g., a crossed product or group von Neumann algebra, which will be clear from the context.
\medskip

\paragraph{\textbf{Acknowledgements}} 

This work is partially supported by National Natural Science Foundation of China (Grant No. 12001081, No. 12271074) and the Fundamental Research Funds for the Central Universities (Grant No. DUT19RC(3)075). Part of this work was done during the visiting of Y. J. to the Institute for Advances Study in Mathematics of HIT in October, 2023. Y. J. is grateful to Prof. Simeng Wang for his invitation and hospitality during this visiting and to Prof. Adam Skalski and Dr. Amrutam Tattwamasi for helpful discussions on this paper.

\section{Preliminaries}\label{section: preliminaries}

In this section, we collect relevant techniques needed for proving Theorem \ref{thm}.

Let $(A,\phi)$ be a tracial von Neumann subalgebra equipped with a trace $\phi$ and $H\curvearrowright (A,\phi)$ be a $\phi$-preserving action. Then we may form the crossed product von Neumann algebra $A\rtimes H$ \cite[\S~5.2]{ap}, which is equipped with a trace $\tau$ defined by 
\[\tau(\sum_ha_hu_h)=\phi(a_e),~\text{where}~ \sum_ha_hu_h\in A\rtimes H.\]
In this paper, we are mainly interested in the case $A=L(\mathbb{Z}^2)$ equipped with the canonical trace and $H=SL_2(\mathbb{Z})$. By Pontrygain duality, $A\cong L^{\infty}(\mathbb{T}^2,\mu)$, where $\mu$ denotes the Haar measure on the 2-torus $\mathbb{T}^2$. The action $G\curvearrowright A$ is induced by the canonical left matrix multiplication on column vectors: $SL_2(\mathbb{Z})\curvearrowright \mathbb{Z}^2$. It is easy to check that $L(\mathbb{Z}^2\rtimes SL_2(\mathbb{Z}))\cong L^{\infty}(\mathbb{T}^2,\mu)\rtimes SL_2(\mathbb{Z})$.  

The following proposition is standard, but we decide to include its proof for completeness. 

\begin{prop}\label{prop: detect crossed product}
Let $H$ be any countable discrete group and $H\curvearrowright (A,\phi)$ be a $\phi$-preserving action on a tracial von Neumann algebra $(A,\phi)$.
Let $L(H)\subseteq P\subseteq A\rtimes H$ be an inclusion of von Neumann algebras. Denote by $E: A\rtimes H\twoheadrightarrow P$ the trace preserving conditional expectation onto $P$. Then $P=B\rtimes H$ for some $H$-invariant von Neumann subalgebra $B\subseteq A$ if and only if $E(A)\subseteq A$.
\end{prop}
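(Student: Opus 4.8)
The plan is to work through the canonical trace-preserving conditional expectation $E_A:A\rtimes H\to A$ that extracts the Fourier coefficient at the identity, $E_A(\sum_h a_h u_h)=a_e$, and to analyze the interaction between $E_A$, the given expectation $E$, and the subalgebra $B:=P\cap A$. For an element $x\in P$ its Fourier coefficients are $E_A(xu_g^{*})\in A$, and since $L(H)\subseteq P$ we have $u_g^{*}\in P$, so $xu_g^{*}\in P$; thus showing that $P$ is of crossed-product form amounts to the single statement $E_A(P)\subseteq B$.

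For the ``only if'' direction I would argue as follows. If $P=B\rtimes H$ with $B\subseteq A$ an $H$-invariant von Neumann subalgebra, let $E_B:A\to B$ be the trace-preserving expectation and define $F(\sum_h a_h u_h):=\sum_h E_B(a_h)u_h$. This $F$ is a normal, trace-preserving idempotent map with range $B\rtimes H=P$, so by uniqueness of the trace-preserving conditional expectation it coincides with $E$. In particular $E(a)=E_B(a)\in B\subseteq A$ for every $a\in A$, giving $E(A)\subseteq A$. This direction is routine.

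For the ``if'' direction, assume $E(A)\subseteq A$ and set $B:=P\cap A$, a von Neumann subalgebra of $A$. Because $L(H)\subseteq P$, conjugation by each $u_h$ preserves both $P$ and $A$, hence preserves $B$; so $B$ is $H$-invariant and $B\rtimes H\subseteq P$. Moreover $E(A)\subseteq A$ together with $E(A)\subseteq P$ gives $E(A)\subseteq B$, while $E$ fixes $B\subseteq P$, so in fact $E(A)=B$. It then remains to prove $E_A(P)\subseteq B$, and this is where the hypothesis is used: for $y\in P$ and arbitrary $a\in A$ I would run the chain
\[
\tau\big(E_A(y)\,a^{*}\big)=\tau(y\,a^{*})=\tau\big(y\,E(a^{*})\big)=\tau\big(E_A(y)\,E(a^{*})\big),
\]
where the second equality uses $y\in P$ (so $\tau(yw)=\tau(yE(w))$ by $P$-bimodularity and trace-invariance of $E$), and the third uses the hypothesis $E(a^{*})\in A$ together with the $A$-bimodularity and trace-invariance of $E_A$. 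As $a$ ranges over $A$, the elements $a^{*}-E(a^{*})$ sweep out $A\ominus B$, and the identity says $E_A(y)$ is $L^2$-orthogonal to $A\ominus B$; faithfulness of $\tau$ on $A$ forces $E_A(y)\in B$. Feeding this back, every $x\in P$ has all Fourier coefficients in $B$, whence $P\subseteq B\rtimes H$ and equality holds.

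The main obstacle is exactly the inclusion $E_A(P)\subseteq B$. A priori the expectation $E_A$ onto $A$ and the expectation $E$ onto $P$ need not commute, so no commuting-square argument is available for free; the content of the hypothesis $E(A)\subseteq A$ is precisely that it lets the trace manipulation above close up. In writing out the final step I would also be careful to justify that $L^2$-convergence of the Fourier series, with all coefficients in $B$, genuinely places $x$ in the von Neumann algebra $B\rtimes H$, i.e.\ that $L^2(B\rtimes H)\cap(A\rtimes H)=B\rtimes H$.
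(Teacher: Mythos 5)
Your proof is correct, but for the nontrivial inclusion $P\subseteq B\rtimes H$ it runs along a genuinely different track than the paper's. The paper defines $B:=E(A)$ (which coincides with your $B=P\cap A$, since $E(A)\subseteq A\cap P$ and $E$ fixes $A\cap P$), observes $E(au_h)=E(a)u_h$ by $P$-bimodularity, and then argues by approximation: any $x\in P$ is the SOT-limit of uniformly norm-bounded finite sums $\sum_h a_h^{(n)}u_h$ (Kaplansky density), and since $E$ is normal and $\|\cdot\|_2$-contractive, $x=E(x)$ is a bounded $\|\cdot\|_2$-limit of $\sum_h E(a_h^{(n)})u_h\in B\rtimes H$, hence lies in $B\rtimes H$. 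You instead work coefficientwise: your trace computation showing $E_A(y)\perp A\ominus B$ is valid as written (the three equalities use exactly the trace-preservation and bimodularity of $E_A$ and $E$ that you cite, and since $E(A)=B$ the restriction $E|_A$ is the trace-preserving expectation onto $B$, so $\{a-E(a):a\in A\}$ is indeed $\|\cdot\|_2$-dense in $L^2(A)\ominus L^2(B)$), and then you use $u_g^*\in P$ to place every Fourier coefficient of $x\in P$ in $B$. The step you flag at the end is genuinely needed but is standard and closes in one line: if all Fourier coefficients of $x\in A\rtimes H$ lie in $B$, then $\hat{x}\in L^2(B\rtimes H)$, and since the trace-preserving expectation onto $B\rtimes H$ implements the orthogonal projection of $L^2(A\rtimes H)$ onto $L^2(B\rtimes H)$, faithfulness of $\tau$ gives $x=E_{B\rtimes H}(x)\in B\rtimes H$. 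In terms of trade-offs, the paper's route needs Kaplansky density and $L^2$-continuity of $E$ on bounded sets but no Fourier-coefficient analysis; yours avoids the density and uniform-boundedness bookkeeping at the cost of the (standard) identification $L^2(B\rtimes H)\cap(A\rtimes H)=B\rtimes H$. Your ``only if'' direction, via uniqueness of the trace-preserving conditional expectation, is the routine verification the paper dismisses as clear.
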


\begin{proof}
The ``only if" direction is clear, and we are left to check the ``if" direction holds true. Define $B=E(A)$. The assumption $E(A)\subseteq A$ implies $B$ is a von Neumann subalgebra in $A\cap P$. Then we just need to check that $P=B\rtimes H$. On the one hand, it is clear that $B\rtimes H\subseteq P$. On the other hand, for any $a\in A$ and $h\in H$, we have $E(au_h)=E(a)u_h$. Then for any $x\in P$, by taking a sequence of finite sum $\sum_ha_h^{(n)}u_h$ with uniformly bounded norm to approximate $x$ in the strong operator topology, we deduce that $x=E(x)$ can be approximated by $E(\sum_ha_h^{(n)}u_h)=\sum_hE(a_h^{(n)})u_h\in B\rtimes H$. Hence $x\in B\rtimes H$. 
\end{proof}

For the convenience of later use, we record the following fact on $G=\mathbb{Z}^2\rtimes SL_2(\mathbb{Z})$. We refer to \cites{cs, cds} for unexplained notions.

\begin{prop}\label{prop: G as in chifan-das-sun}
The group $G=\mathbb{Z}^2\rtimes SL_2(\mathbb{Z})$ is an icc exact group which satisfies condition 2) in \cite[Theorem 5.1]{cds}, i.e., it admits a proper array into a weakly $\ell^2$-representation.
\end{prop}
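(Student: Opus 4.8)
The plan is to verify the three assertions---icc, exactness, and the existence of a proper array into a weakly-$\ell^2$ representation---separately, the last being the substantial one. First I would check the icc property by a direct conjugacy computation. Writing a general non-trivial element as $(v,A)$ with $v\in\mathbb{Z}^2$ and $A\in SL_2(\mathbb{Z})$, conjugation by $(w,I)$ sends it to $(v+(I-A)w,A)$, while conjugation by $(0,B)$ sends it to $(Bv,BAB^{-1})$. If $A\neq I$ then $I-A\neq 0$, so $(I-A)\mathbb{Z}^2$ is an infinite subgroup and the conjugacy class is already infinite; if $A=I$ and $v\neq 0$, then the $SL_2(\mathbb{Z})$-orbit of $v$ is infinite (apply the powers of an elementary unipotent to $v$), so again the class is infinite. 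Hence $G$ is icc.

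For exactness I would simply observe that $G$ embeds into $GL_3(\mathbb{Z})$ as the group of integral affine transformations $x\mapsto Ax+v$, so that $G$ is a finitely generated linear group and therefore exact. Alternatively, $G$ is an extension of $SL_2(\mathbb{Z})$ (virtually free, hence exact) by the amenable group $\mathbb{Z}^2$, and the class of exact groups is closed under extensions.

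The heart of the argument is the construction of a proper array $q\colon G\to\mathcal{H}_{\mathbb{R}}$ into a weakly-$\ell^2$ representation $\pi$ in the sense of \cite{cs}. My plan is to build $q$ as a direct sum $q=q_0\oplus q_1$ of two pieces controlling the two directions of the semidirect product. For the $SL_2(\mathbb{Z})$-direction I would take a proper (anti-)symmetric array $q_0$ on $SL_2(\mathbb{Z})$ into a weakly-$\ell^2$ representation $\rho_0$, which exists because $SL_2(\mathbb{Z})$ is virtually free, hence hyperbolic (such arrays for hyperbolic groups are produced in \cite{cs}); pulling $q_0$ and $\rho_0$ back along the quotient map $\epsilon\colon G\twoheadrightarrow SL_2(\mathbb{Z})$ gives an array on $G$ whose target $\rho_0\circ\epsilon$ is still weakly-$\ell^2$ for $G$, since $\mathbb{Z}^2$ is amenable and normal, so that $\ell^2(G/\mathbb{Z}^2)$ is weakly contained in $\ell^2(G)$. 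This piece is constant on $\mathbb{Z}^2$-cosets, hence controls only the $SL_2(\mathbb{Z})$-coordinate. For the $\mathbb{Z}^2$-direction I would use a weakly-$\ell^2$ representation $\pi_1$ of $G$ assembled from the linear action $SL_2(\mathbb{Z})\curvearrowright\mathbb{Z}^2\setminus\{0\}$ (decomposed over its orbits): since the point stabilizers in $SL_2(\mathbb{Z})$ are infinite cyclic, hence amenable, each orbit contributes a quasi-regular representation weakly contained in $\ell^2(G)$, which is exactly what forces weak-$\ell^2$-ness. The task is then to manufacture an array $q_1$ into $\pi_1$ with $\|q_1(v,I)\|\to\infty$ as $\|v\|\to\infty$, so that $q=q_0\oplus q_1$ becomes proper on all of $G$; finally I would check that the direct sum preserves the two defining array inequalities together with anti-symmetry and conclude via \cite{cs,cds}.

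The main obstacle is precisely the construction of $q_1$ detecting the $\mathbb{Z}^2$-direction. The pair $(G,\mathbb{Z}^2)$ has the relative property (T) of Kazhdan--Margulis, so \emph{no} genuine $1$-cocycle into a unitary representation can be unbounded on $\mathbb{Z}^2$; this is exactly why one cannot use a proper cocycle and must pass to arrays, whose defect $\Delta(g)$ is permitted to grow with $g$. Verifying the two array bounds $\|\pi_g q_1(h)-q_1(gh)\|\le\Delta(g)$ and $\|q_1(hg)-q_1(h)\|\le\Delta(g)$ for a candidate $q_1$---while simultaneously keeping $\|q_1(v,I)\|$ proper, keeping $\pi_1$ weakly-$\ell^2$, and tracking how the $SL_2(\mathbb{Z})$-action permutes the orbit components of $\pi_1$---is the delicate, computation-heavy step where I would concentrate the effort.
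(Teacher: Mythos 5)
Your verifications of the icc property and of exactness are correct, and they match what the paper leaves to the reader (the paper simply says ``it is easy to check that $G$ is icc'' and attributes exactness implicitly to bi-exactness). However, the substantial assertion of the proposition --- the existence of a proper array into a weakly-$\ell^2$ representation --- is exactly the part you do not prove. You describe a plausible target representation $\pi_1$ (and that part is fine: the $G$-stabilizer of $v\in\mathbb{Z}^2\setminus\{0\}$ is $\mathbb{Z}^2\rtimes \mathrm{Stab}_{SL_2(\mathbb{Z})}(v)$, an extension of amenable by amenable, so each quasi-regular summand is weakly contained in $\ell^2(G)$), but you explicitly defer the construction of the map $q_1$ and the verification of the two array inequalities, calling it ``the delicate, computation-heavy step where I would concentrate the effort.'' Since the whole content of the proposition is precisely that such an array exists, this is a genuine gap rather than a routine computation omitted for brevity; no candidate formula for $q_1$ is even proposed, and your own (correct) observation that relative property (T) of the pair $(G,\mathbb{Z}^2)$ kills every unbounded cocycle on $\mathbb{Z}^2$ is evidence of how constrained any such construction must be.

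The paper closes this gap without constructing anything by hand: it quotes Ozawa's theorem that $G=\mathbb{Z}^2\rtimes SL_2(\mathbb{Z})$ is bi-exact, i.e.\ lies in Ozawa's class $\mathcal{S}$ \cite{ozawa_hokkaido} (see also \cite{ozawa_imrn} and \cite[Remark 19]{sako}), and then invokes \cite[Definition 1.6 and Proposition 1.11]{cs}, by which an exact, bi-exact group automatically admits a proper array into a weakly-$\ell^2$ representation --- which is exactly condition 2) of \cite[Theorem 5.1]{cds}. The analytic input hidden in that citation (amenability of the boundary action underlying Ozawa's solidity argument, which simultaneously controls the $\mathbb{Z}^2$-direction) is precisely what your missing $q_1$ would have to encode, and in the literature such arrays for this group are obtained only through this bi-exactness machinery, not by the direct-sum patching of a hyperbolic piece $q_0$ with an ad hoc $\mathbb{Z}^2$-piece that you outline. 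To repair your proof, replace the attempted explicit construction by the citation chain: bi-exactness of $G$ (Ozawa) plus bi-exact $\Rightarrow$ proper array into a weakly-$\ell^2$ representation (Chifan--Sinclair); your icc and exactness arguments can then stand as written.
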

\begin{proof}
It is easy to check that $G$ is icc.  Bi-exactness of $G$ was first proved by Ozawa \cite{ozawa_hokkaido}. Note that it is often denoted as the class $\mathcal{S}$ in the literature for Ozawa's class of bi-exact groups, e.g., \cite{ozawa_imrn}, \cite[Remark 19]{sako}. Hence from \cite[Definition 1.6 and Proposition 1.11]{cs}, we deduce that $G$ satisfies the above mentioned condition 2) in \cite[Theorem 5.1]{cds}, see also \cite[Proposition 1.9]{cs}.
\end{proof}

\section{Proof of Theorem \ref{thm} and Corollary \ref{cor}}\label{section: proof}

We are ready for the proof of Theorem \ref{thm}. 

The ``if" direction in Theorem \ref{thm} is easily verified, hence we just need to prove the ``only if" direction, which we record as a proposition.

\begin{prop}\label{prop: version of thm}
Let $G=\mathbb{Z}^2\rtimes SL_2(\mathbb{Z})$. Let $P$ be a $G$-invariant von Neumann subalgebra in $(L(G),\tau)$. Then 
\begin{itemize}
\item either $P=L(H)$ for some normal subgroup $H\lhd G$; or,
\item $P=A_n$ for some $n\geq 0$, where $A_n\subset L(n\mathbb{Z}^2)$ is defined by $A_n=\{x\in L(n\mathbb{Z}^2): \tau(xs)=\tau(xs^{-1}), \forall s\in n\mathbb{Z}^2\}$.
\end{itemize}
\end{prop}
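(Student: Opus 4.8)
The plan is to follow the two–stage reduction indicated in the introduction. \emph{First stage: reduce to the amenable radical.} Since $G$ is an icc exact group admitting a proper array into a weakly $\ell^2$-representation (Proposition~\ref{prop: G as in chifan-das-sun}), the dichotomy of Chifan--Das--Sun \cite[Theorem 5.1]{cds} applies to the $G$-invariant subalgebra $P$: either $P=L(H)$ for some normal $H\lhd G$, in which case we land in the first alternative and are done, or $P$ is amenable. In the amenable case \cite[Theorem A]{aho} forces $P\subseteq L(G_a)$, where $G_a$ is the amenable radical of $G$. A short group computation identifies $G_a=\mathbb{Z}^2\rtimes\{\pm I_2\}$ (the amenable radical of $SL_2(\mathbb{Z})$ is its center $\{\pm I_2\}$, and $\mathbb{Z}^2$ is amenable and normal), so it remains to classify the $G$-invariant $P\subseteq L(\mathbb{Z}^2\rtimes\{\pm I_2\})$.

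\emph{Second stage: detect the crossed product.} I would work inside $L(G_a)=L^\infty(\mathbb{T}^2)\rtimes\mathbb{Z}_2$, where the nontrivial element of $\mathbb{Z}_2=\{\pm I_2\}$ is implemented by a self-adjoint unitary $u=u_{(0,-I_2)}$ acting by the flip $\sigma\colon u_v\mapsto u_{-v}$. The decisive preliminary computation is the $G$-conjugation action on $u$: for $w\in\mathbb{Z}^2$ one finds $u_w\,u\,u_w^{*}=u_{2w}\,u$, while $SL_2(\mathbb{Z})$ fixes $u$ and acts on coefficients by $\alpha_h$. Writing elements of $P$ as $a_0+a_1u$ with $a_0,a_1\in L^\infty(\mathbb{T}^2)$ and passing to the $L^2$-closure, the space of ``off-diagonal parts'' $\{a_1\}$ is invariant under $\alpha_h$ and under multiplication by the characters $\chi_{2w}$, $w\in\mathbb{Z}^2$. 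I would argue that this double invariance leaves exactly two cases: either the off-diagonal part vanishes, so that $E(L^\infty(\mathbb{T}^2))\subseteq L^\infty(\mathbb{T}^2)$ and $P\subseteq L(\mathbb{Z}^2)$; or it is large, in which case one first produces $u\in P$ and checks $E(L(\mathbb{Z}^2))\subseteq L(\mathbb{Z}^2)$, so that Proposition~\ref{prop: detect crossed product} yields $P=B\rtimes\{\pm I_2\}$. The $\chi_{2w}$-invariance then forces $B=L(d\mathbb{Z}^2)$ with $d\mid 2$, i.e. $d\in\{1,2\}$, giving $P=L(d\mathbb{Z}^2\rtimes\{\pm I_2\})$. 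These are precisely the subgroups $d\mathbb{Z}^2\rtimes\{\pm I_2\}$ that are normal in $G$, so again we land in the first alternative.

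\emph{Third stage: the case $P\subseteq L(\mathbb{Z}^2)$, where the $A_n$ arise.} Here $G$-invariance amounts to $SL_2(\mathbb{Z})$-invariance of $P$ inside $L^\infty(\mathbb{T}^2)$. Let $E\colon L(\mathbb{Z}^2)\to P$ be the trace-preserving, $SL_2(\mathbb{Z})$-equivariant conditional expectation. The key input, in the spirit of the techniques of \cite{aj}, is a stabilizer argument: for primitive $v$ the stabilizer $\Gamma_v\cong\mathbb{Z}$ is generated by a unipotent whose orbits on $\mathbb{Z}^2$ are infinite off the line $\mathbb{Z}v$; since $E(u_v)$ is $\Gamma_v$-fixed and lies in $L^2$, its Fourier support must sit on $\mathbb{Z}v$. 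I would upgrade this to support exactly on $\{v,-v\}$, write $E(u_v)=c_1u_v+c_{-1}u_{-v}$, and feed it into $E\circ E=E$ together with equivariance to obtain, for each orbit, the trichotomy $E(u_v)\in\{0,\ u_v,\ \tfrac12(u_v+u_{-v})\}$ — the antisymmetric solution $\tfrac12(u_v-u_{-v})$ being excluded because its range fails to be a $*$-subalgebra. Letting $n\ge 0$ generate the $SL_2(\mathbb{Z})$-invariant Fourier support (so $P\subseteq L(n\mathbb{Z}^2)$), this yields $P=L(n\mathbb{Z}^2)$ on the ``$u_v$'' branch and $P=A_n$ on the symmetric branch.

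I expect the main obstacle to be this third stage, for two reasons. First, killing the higher-$\gcd$ Fourier modes of $E(u_v)$ on the line $\mathbb{Z}v$: representation theory alone does not force the projection onto $L^2(P)$ to respect the orbit decomposition, because the Hecke algebra of $(SL_2(\mathbb{Z}),\Gamma_v)$ is large and supplies genuine cross-orbit intertwiners, so one must crucially exploit the \emph{multiplicative} structure of $P$ (products of line-supported elements spread onto off-axis modes, and reapplying $E$ forces consistency relations) to annihilate these cross terms. Second, checking that the trichotomy is globally coherent across all orbits and assembles into a single $n$ — that one cannot symmetrize on some orbits while keeping the full algebra on others — which again rests on the algebra structure linking different lines through the transitive $SL_2(\mathbb{Z})$-action on primitive vectors. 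The first stage is essentially a citation and the second a finite computation; the genuinely new work, and the source of the exotic family $A_n$, is concentrated in this last Fourier-analytic step.
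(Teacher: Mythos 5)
Your first stage matches the paper exactly (the \cite[Theorem 5.1]{cds} dichotomy followed by \cite[Theorem A]{aho} and the identification of the amenable radical), but there is a genuine gap in your second stage, and it sits precisely where the real work of the proof is. The dichotomy you assert --- that the $L^2$-space of off-diagonal parts either vanishes (giving $P\subseteq L(\mathbb{Z}^2)$) or is large enough to produce $u\in P$ --- does not follow from the ``double invariance'' you invoke. That space is a closed submodule of $L^2(\mathbb{T}^2)$ over the copy of $L(2\mathbb{Z}^2)$ acting by multiplication, hence a measurable field of subspaces of a $4$-dimensional fibre indexed by the cosets $\mathbb{Z}^2/2\mathbb{Z}^2$, and the $\alpha_h$-action merely permutes these cosets (fixing the trivial one and acting transitively on the other three). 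Intermediate invariant subspaces survive this linear analysis --- the even-character component alone, its complement, and twisted field-type subspaces --- so neither branch of your dichotomy is forced, and in particular ``off-diagonal nonzero $\Rightarrow u\in P$'' is exactly as hard as the theorem itself. You correctly diagnose in your third stage that representation-theoretic invariance alone is insufficient and the multiplicative structure of $P$ must be exploited, but you do not notice that this objection applies verbatim to stage two, where you never supply the multiplicative argument. The paper does: it first shows $E(s)\in\{0,s\}$ by an easy relative-commutant computation ($E(s)\in L(SL_2(\mathbb{Z}))'\cap L(G)=L(\mathbb{Z}/2\mathbb{Z})$, and $E(s)^2=E(sE(s))$ forces idempotence), and then, in the case $E(s)=0$, the statement that all off-diagonal parts vanish, i.e.\ $E(vs)=0$ for every $v\in\mathbb{Z}^2$, is the heart of the proof.

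Two further structural points. First, the off-diagonal vanishing cannot be decoupled from the identification of the diagonal, so your proposed order (settle the crossed-product alternative first, classify the abelian case afterwards) cannot run as written: the paper first applies the classification of $SL_2(\mathbb{Z})$-invariant subalgebras of $L(\mathbb{Z}^2)$ to $P\cap L(\mathbb{Z}^2)$, obtaining $\mathbb{C}$, $L(n\mathbb{Z}^2)$ or $A_n$, and only then kills $E(e_1s)=a+bs$ in each subcase separately, using the known values of $E(v)$ together with the bimodule identity $E(e_1s)E(f_1s)=E\bigl(e_1s\,E(f_1s)\bigr)$ (after reducing to the single vector $e_1$ via transitivity of $SL_2(\mathbb{Z})$ on vectors of fixed gcd --- an idea your stage three does contain). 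The subcase $P\cap L(\mathbb{Z}^2)=A_1$ even requires a separate contradiction argument, producing an element of $L(\mathbb{Z}^2)\setminus A_1$ inside $P$ from the hypothetical nonzero coefficient $\mu_1=\tfrac12$. Second, you misallocate the difficulty: your third stage, which you expect to be the genuinely new work, is in the paper a citation to \cite[Example 5.9]{wit} (a corrected form of Park's theorem), so re-proving it is legitimate but optional; whereas the ``finite computation'' you expect in stage two is in fact several pages of delicate Fourier-coefficient analysis and is exactly where the exotic family $A_n$ is separated from the crossed-product alternatives.
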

\begin{proof}
First, note that $G$ satisfies condition 2) in \cite[Theorem 5.1]{cds} as explained in Proposition \ref{prop: G as in chifan-das-sun}. Therefore, we may apply \cite[Theorem 5.1]{cds} to deduce that if $P$ is non-amenable, then $P=L(H)$ for some normal subgroup $H\lhd G$.

From now on, we may assume that $P$ is amenable. Denote by $\mathbb{Z}/{2\mathbb{Z}}=\langle s \rangle$, where $s=-I_2\in SL_2(\mathbb{Z})$. 

Note that the amenable radical of $G$ is $\mathbb{Z}^2\rtimes \mathbb{Z}/{2\mathbb{Z}}$, say e.g., by \cite[Proposition 2.10]{jiangskalski}. 
From \cite[Theorem A]{aho}, we conclude that $P\subseteq L(\mathbb{Z}^2\rtimes \mathbb{Z}/{2\mathbb{Z}})$. Therefore, it suffices to prove the following claim. 

\textbf{Claim}: if $P\subseteq L(\mathbb{Z}^2\rtimes \mathbb{Z}/{2\mathbb{Z}})$ and $P$ is $G$-invariant, then $P\in\{\mathbb{C}, L(n\mathbb{Z}^2)(n>0), A_n(n>0), L(2\mathbb{Z}^2\rtimes \mathbb{Z}/{2\mathbb{Z}}), L(\mathbb{Z}^2\rtimes \mathbb{Z}/{2\mathbb{Z}})\}$.
\begin{proof}[Proof of the \textbf{Claim}]
Let $E: L(\mathbb{Z}^2\rtimes \mathbb{Z}/{2\mathbb{Z}})\rightarrow P$ be the trace $\tau$-preserving conditional expectation onto $P$. Below, we directly write $g$ for the canonical unitary $u_g$ inside $L(G)$ for simplicity and we use the notation $\langle a, b \rangle$ to mean $\tau(b^*a)$  for any $a, b\in L(G)$.

Note that $L(SL_2(\mathbb{Z}))'\cap L(\mathbb{Z}^2\rtimes SL_2(\mathbb{Z}))=L(\mathbb{Z}/{2\mathbb{Z}})$, say by a direct calculation using \cite[Lemma 2.7]{aj}.

Since $P$ is $G$-invariant, we deduce that $gE(s)g^{-1}=E(gsg^{-1})=E(s)$ for all $g\in SL_2(\mathbb{Z})$. Hence, 
$E(s)\in L(SL_2(\mathbb{Z}))'\cap L(\mathbb{Z}^2\rtimes SL_2(\mathbb{Z}))=L(\mathbb{Z}/{2\mathbb{Z}})$. Therefore, we may write $E(s)=\lambda+\mu s$, where $\lambda,\mu\in\mathbb{C}$. Observe that $\lambda=\tau(E(s))=\tau(s)=0$. Hence, $\mu^2=(\mu s)^2=E(s)E(s)=E(sE(s))=E(s\mu s)=\mu$. It follows that $\mu=0$ or $1$. Hence, $E(s)=0$ or $s$.

\textbf{Case 1}: $E(s)=s$.

Take any $v\in\mathbb{Z}^2$, we have $vsv^{-1}\in P$, i.e., $(v\sigma_s(v^{-1}))s\in P$, so $v\sigma_s(v^{-1})\in P$. Notice that if we write $v=(x, y)^t$, where the superscript ``$t$" stands for the transpose of the row vector $(x, y)$, then $v\sigma_s(v^{-1})=2(x,y)^t$. Hence $L(2\mathbb{Z}^2\rtimes \mathbb{Z}/{2\mathbb{Z}})\subseteq P$.

Let us show that either $P=L(2\mathbb{Z}^2\rtimes \mathbb{Z}/{2\mathbb{Z}})$ or $P=L(\mathbb{Z}^2\rtimes \mathbb{Z}/{2\mathbb{Z}})$.

Notice that for any $a\in L(\mathbb{Z}^2)$, we have $E(a)\in L(\mathbb{Z}^2)'\cap L(G)=L(\mathbb{Z}^2)$.
Thus, $P=E(L(\mathbb{Z}^2))\rtimes \mathbb{Z}/{2\mathbb{Z}}$ by Proposition \ref{prop: detect crossed product}. Therefore, $E(L(\mathbb{Z}^2))=P\cap L(\mathbb{Z}^2)$ and it is clearly $SL_2(\mathbb{Z})$-invariant. By 
\cite[Example 5.9]{wit} (which is essentially a minor correction of \cite[Theorem 2.3]{park}, see also the proof
of \cite[Lemma 3.5]{jiangskalski}), we deduce that either $P\cap L(\mathbb{Z}^2)=L(n\mathbb{Z}^2)$ or $P\cap L(\mathbb{Z}^2)=A_n$ for some $n\geq 0$. Here, $n\mathbb{Z}^2=n\mathbb{Z}\oplus n\mathbb{Z}$. Combining this with the fact that $L(2\mathbb{Z}^2)\rtimes \mathbb{Z}/{2\mathbb{Z}}\subseteq P$, we can deduce that $P\cap L(\mathbb{Z}^2)=L(n\mathbb{Z}^2)$ for $n=1$ or $2$; equivalently, $P=L(\mathbb{Z}^2\rtimes \mathbb{Z}/{2\mathbb{Z}})$ or $L(2\mathbb{Z}^2\rtimes \mathbb{Z}/{2\mathbb{Z}})$.

\textbf{Case 2}: $E(s)=0$.

We record the following observation for later use.

\textbf{Observation}: If $E(s)=0$, then $E(vs)=0$ for all $v\in\mathbb{Z}^2\Leftrightarrow E(e_1s)=0$, where $e_1=(1, 0)^t\in\mathbb{Z}^2$.

\begin{proof}[Proof of the Observation]
We just need to check $\Leftarrow$ holds. First, for any $v=(x, 0)^t\in\mathbb{Z}^2$, we get that $0=vE(s)v^{-1}=E(vsv^{-1})=E(v\sigma_s(v^{-1})s)=E((2x, 0)^ts)$; similarly, $0=vE(e_1s)v^{-1}=E((2x+1, 0)^ts)$. Since $x\in\mathbb{Z}$ is arbitrary, we deduce that $E(e_ns)=0$ for all $n\geq 1$, where $e_n=(n, 0)^t$. Next, take any $v=(x, y)^t\in\mathbb{Z}^2\setminus\{0\}$, set $n=\text{gcd}(|x|, |y|)$. Here, if $|x|=0$ (respectively $|y|=0$), then $n:=|y|$ (respectively $n:=|x|$). Observe that there exists some $g\in SL_2(\mathbb{Z})$ such that $v=g\cdot e_n$, where $g\cdot e_n$ denotes the matrix left multiplication. Indeed, from $n=\gcd(|x|, |y|)$, we get two integers $a, b\in\mathbb{Z}$ such that $xa-yb=n$, then set $g:=\begin{pmatrix}
\frac{x}{n}&b\\
\frac{y}{n}&a
\end{pmatrix}$.

Hence $E(vs)=E((g\cdot e_n)s)=E(ge_ng^{-1}s)=E(g(e_ns)g^{-1})=gE(e_ns)g^{-1}=0$.
\end{proof}

Since $P\cap L(\mathbb{Z}^2)$ is $SL_2(\mathbb{Z})$-invariant, we may apply \cite[Example 5.9]{wit} to deduce that $P\cap L(\mathbb{Z}^2)=\mathbb{C}$, $L(n\mathbb{Z}^2)$ or $A_n$ for some $n\geq 1$. We split the proof by considering three subcases.

\textbf{Subcase I}: $P\cap L(\mathbb{Z}^2)=\mathbb{C}$.

We claim that $P=\mathbb{C}$.

First, observe that for any $v\in \mathbb{Z}^2$, we have 
\begin{align*}
E(v)\in P\cap L(\mathbb{Z}^2)'=P\cap L(\mathbb{Z}^2)=\mathbb{C},
\end{align*}
where to get the 2nd equality, we have used the fact that $L(\mathbb{Z}^2)$ is a masa in $L(G)$.
Thus, $\forall v\in \mathbb{Z}^2\setminus \{0\}$, we get $E(v)=\tau(E(v))=\tau(v)=0$.

We are left to show $E(vs)=0$ for all $v\in\mathbb{Z}^2\setminus \{0\}$. By the above observation, it suffices to show that $E(e_1s)=0$. 

First, notice that
\begin{align*}
E(e_1s)\in L(\{\begin{pmatrix}
1&\mathbb{Z}\\
0&1
\end{pmatrix} \})'\cap L(\mathbb{Z}^2\rtimes \mathbb{Z}/{2\mathbb{Z}})\subseteq L((\mathbb{Z}, 0)^t)\rtimes \mathbb{Z}/{2\mathbb{Z}}.
\end{align*}
Thus, we may write $E(e_1s)=a+bs$, where $a, b\in L((\mathbb{Z}, 0)^t)$.

From $\langle v-E(v), E(e_1s)\rangle=0$, we get that $\langle v, a \rangle=0$ for all $v\in\mathbb{Z}^2\setminus \{0\}$. Hence, $a\in\mathbb{C}$. Then by computing the tace of $E(e_1s)$, we get that $a=\tau(a+bs)=\tau(E(e_1s))=\tau(e_1s)=0$. Hence, $E(e_1s)=bs$. 

Let us write $b=\sum_{n\in\mathbb{Z}}\mu_ne_n$, where $\mu_n\in\mathbb{C}$ and set $f_n=(0, n)^t\in\mathbb{Z}^2$. 
Notice that $f_n=\begin{pmatrix}
0&-1\\
1&0
\end{pmatrix}\cdot e_n$. Thus,

$E(f_1s)=E(\begin{pmatrix}
0&-1\\
1&0
\end{pmatrix}e_1\begin{pmatrix}
0&1\\
-1&0
\end{pmatrix}s)=\begin{pmatrix}
0&-1\\
1&0
\end{pmatrix} E(e_1s)\begin{pmatrix}
0&1\\
-1&0
\end{pmatrix}=\sum_{n\in\mathbb{Z}}\mu_n f_ns$. 

By $P$-bimodule property of $E$, we have $E(e_1s)E(f_1s)=E(e_1sE(f_1s))$. Let us compute both sides concretely.
\begin{align*}
E(e_1s)E(f_1s)&=\sum_{n, m\in\mathbb{Z}}\mu_m\mu_n\begin{pmatrix}
m\\n
\end{pmatrix} ,\\
E(e_1sE(f_1s))&=E(e_1s(\sum_{m\in\mathbb{Z}}\mu_mf_ms))=\sum_{m\in\mathbb{Z}}\mu_mE(\begin{pmatrix}
1\\
-m
\end{pmatrix})=0,
\end{align*}
where to get the last equality, we used the fact that $E(v)=0$ for all $v\in\mathbb{Z}^2\setminus\{0\}$.
Hence, $\mu_m\mu_n=0$ for all $(m, n)\in\mathbb{Z}^2$. Thus, $\mu_n^2=0$, i.e., $\mu_n=0$ for all $n\in\mathbb{Z}$; equivalently, $b=0$ and thus $E(e_1s)=0$. The proof of this subcase is done.

\textbf{Subcase II}: $P\cap L(\mathbb{Z}^2)=L(n\mathbb{Z}^2)$ for some $n\geq 1$.

We claim that $P=L(n\mathbb{Z}^2)$. 

If $n=1$, then $L(\mathbb{Z}^2)\subseteq P$ and thus $E(v)=v$ for all $v\in\mathbb{Z}^2$. Hence  $E(vs)=vE(s)=0$. Thus, $P=L(\mathbb{Z}^2)$.

From now on, we assume that $n\geq 2$.
The proof given below is essentially the same as the proof of subcase I with minor modification, we record it for completeness.

First, for any $v\in\mathbb{Z}^2\setminus {n\mathbb{Z}^2}$, we observe that $E(v)=0$.

Indeed, 
\begin{align*}
E(v)\in P\cap L(\mathbb{Z}^2)'=P\cap L(\mathbb{Z}^2)=L(n\mathbb{Z}^2).
\end{align*}
Thus, from $\langle v-E(v), E(v)\rangle=0$, we deduce that $\langle E(v), E(v) \rangle=0$, i.e., $E(v)=0$.

Thus, for any $v\in \mathbb{Z}^2$, we have either $E(v)=0$ or $v\in n\mathbb{Z}^2$ and in this case $E(v)=v$.

We are left to show $E(vs)=0$. It suffices to show $E(e_1s)=0$ by the above observation.

First, notice that
\begin{align*}
E(e_1s)\in L(\{\begin{pmatrix}
1&\mathbb{Z}\\
0&1
\end{pmatrix} \})'\cap L(\mathbb{Z}^2\rtimes \mathbb{Z}/{2\mathbb{Z}})\subseteq L((\mathbb{Z}, 0)^t)\rtimes \mathbb{Z}/{2\mathbb{Z}}.
\end{align*}
Thus, we may write $E(e_1s)=a+bs$, where $a, b\in L((\mathbb{Z}, 0)^t)$.

From $\langle v-E(v), E(e_1s)\rangle=0$, we deduce that $\langle v, a \rangle=0$ for all $v\in \mathbb{Z}^2\setminus {n\mathbb{Z}^2}$. Hence, $a\in L((n\mathbb{Z},0)^t)\subset L(n\mathbb{Z}^2)\subseteq P$. Similarly, from $\langle e_1s-E(e_1s), a \rangle=0$, we deduce that $\langle a, a \rangle=0$, i.e., $a=0$.
Hence, $E(e_1s)=bs$.

Then the last part of the proof is exactly the same as the proof of subcase I.

\textbf{Subcase III}: $P\cap L(\mathbb{Z}^2)=A_n$ for some $n\geq 1$.

We claim that $P=A_n$. 

First, fix any non-zero vector $v\in n\mathbb{Z}^2$, we have $E(v)\in P\cap L(\mathbb{Z}^2)'=P\cap L(\mathbb{Z}^2)=A_n$. Hence, we can write $E(v)=\sum\limits_{\omega\in n\mathbb{Z}^2}\lambda_{\omega}\omega$. From $0=\langle v-E(v), P \rangle$, we deduce that $\langle v-\sum\limits_{\omega\in n\mathbb{Z}^2}\lambda_{\omega}\omega, \omega_0+\omega_{0}^{-1} \rangle=0$ for all $\omega_0\in n\mathbb{Z}^2$. Observe that by taking $\omega_0\not\in\{v, v^{-1}\}$, we can get that $0=\lambda_{\omega_0}+\lambda_{\omega_0^{-1}}=\lambda_{\omega_0}.$ Similarly, by taking $\omega_0=v$, we deduce that $\lambda_{v}=\lambda_{v^{-1}}=\frac{1}{2}$. Hence $E(v)=\frac{v+v^{-1}}{2}$ for all $v\in n\mathbb{Z}^2$.

Second, we check that $E(v)=0$ for all $v\in\mathbb{Z}^2\setminus {n\mathbb{Z}^2}$.

Observe that we still have $E(v)\in P\cap L(\mathbb{Z}^2)'=P\cap L(\mathbb{Z}^2)=A_n$. Hence, from the fact that $0=\langle v-E(v), E(v) \rangle$, we deduce that $\langle E(v), E(v)\rangle=0$ for all $v\in \mathbb{Z}^2\setminus {n\mathbb{Z}^2}$ since $\langle v, E(v) \rangle=0$ for such a $v$, thus $E(v)=0$ is proved.

We are left to show $E(e_1s)=0$. 

Once again, we still have 
that
\begin{align*}
E(e_1s)\in L(\{\begin{pmatrix}
1&\mathbb{Z}\\
0&1
\end{pmatrix} \})'\cap L(\mathbb{Z}^2\rtimes \mathbb{Z}/{2\mathbb{Z}})\subseteq L((\mathbb{Z}, 0)^t)\rtimes \mathbb{Z}/{2\mathbb{Z}}.
\end{align*}
Thus, we may write $E(e_1s)=a+bs$, where $a, b\in L((\mathbb{Z}, 0)^t)$.

From $\langle v-E(v), E(e_1s)\rangle=0$, we deduce that $\langle v, a \rangle=0$ for all $v\in \mathbb{Z}^2\setminus {n\mathbb{Z}^2}$. Hence, $a\in L((n\mathbb{Z},0)^t)$.

Next, from $\langle e_1s-E(e_1s), A_n \rangle=0$, we deduce that $\langle a, A_n \rangle=0$, equivalently, $a+\sigma_s(a)=0$. In other words, if we write $a=\sum_{i\in\mathbb{Z}}\lambda_ie_{ni}$, then 
\begin{align}\label{eq: skew-symmetry on the coefficients of a}
\lambda_i+\lambda_{-i}=0,\forall~i\in\mathbb{Z}. 
\end{align}

Now, let us write $a=\sum_{i\in\mathbb{Z}}\lambda_ie_{ni}$ and $b=\sum_{j\in\mathbb{Z}}\mu_je_j$, where $\lambda_i,\mu_j\in \mathbb{C}$ for all $i, j$.

Thus, $E(f_1s)=\begin{pmatrix}
0&-1\\
1&0
\end{pmatrix} E(e_1s)\begin{pmatrix}
0&1\\
-1&0
\end{pmatrix} =(\sum_{i\in\mathbb{Z}}\lambda_if_{ni})+(\sum_{j\in\mathbb{Z}}\mu_jf_j)s$.

Next, we compute both sides of the identity $E(e_1s)E(f_1s)=E(e_1sE(f_1s))$ concretely.

On the one hand, 
\begin{align*}
E(e_1s)E(f_1s)=[\sum_{i, j}\lambda_i\lambda_j\begin{pmatrix}
ni\\
nj
\end{pmatrix}+\sum_{j, k}\mu_j\mu_k\begin{pmatrix}
j\\
-k
\end{pmatrix}]+[\sum_{i,j}\lambda_i\mu_j\begin{pmatrix}
ni\\
j
\end{pmatrix}+\sum_{i, j}\lambda_i\mu_j\begin{pmatrix}
j\\
-ni
\end{pmatrix}]s;
\end{align*}
on the other hand, we have
\begin{align*}
E(e_1sE(f_1s))&=\sum_i\lambda_iE(\begin{pmatrix}
1\\
-ni
\end{pmatrix}s)+\sum_j\mu_jE(\begin{pmatrix}
1\\
-j
\end{pmatrix})\\
&=\sum_i\lambda_i\begin{pmatrix}
1&0\\
-ni&1
\end{pmatrix}E(e_1s)\begin{pmatrix}
1&0\\
ni&1
\end{pmatrix}+\sum_j\mu_jE(\begin{pmatrix}
1\\
-j
\end{pmatrix})\\
&=\sum_{i,j}\lambda_i\lambda_j\begin{pmatrix}
nj\\
-n^2ij
\end{pmatrix}+\sum_{i, k}\lambda_i\mu_k\begin{pmatrix}
k\\
-nik
\end{pmatrix}s+\sum_j\mu_jE(\begin{pmatrix}
1\\
-j
\end{pmatrix}).
\end{align*}

Due to the difference in computing $E(\begin{pmatrix}
1\\
-j
\end{pmatrix})$, we need to split the proof by considering two cases.

\textbf{Subsubcase I}: $n=1$.

In this case, it is routine to check that $E(\begin{pmatrix}
1\\
-j
\end{pmatrix})=\frac{1}{2}[\begin{pmatrix}
1\\
-j
\end{pmatrix}+\begin{pmatrix}
-1\\
j
\end{pmatrix}]$. Thus we may continue the above calculation to deduce that 
\begin{align*}
E(e_1sE(f_1s))&=\sum_{i,j}\lambda_i\lambda_j\begin{pmatrix}
j\\
-ij
\end{pmatrix}+\sum_{i, k}\lambda_i\mu_k\begin{pmatrix}
k\\
-ik
\end{pmatrix}s+\sum_j\mu_j\frac{1}{2}[\begin{pmatrix}
1\\
-j
\end{pmatrix}+\begin{pmatrix}
-1\\
j
\end{pmatrix}],\\
E(e_1s)E(f_1s)&=[\sum_{i, j}\lambda_i\lambda_j\begin{pmatrix}
i\\
j
\end{pmatrix}+\sum_{j, k}\mu_j\mu_k\begin{pmatrix}
j\\
-k
\end{pmatrix}]+[\sum_{i,j}\lambda_i\mu_j\begin{pmatrix}
i\\
j
\end{pmatrix}+\sum_{i, j}\lambda_i\mu_j\begin{pmatrix}
j\\
-i
\end{pmatrix}]s.
\end{align*}
Therefore, we can deduce that
\begin{align}
\label{eq-a: n=1 case}\sum_{i,j}\lambda_i\lambda_j\begin{pmatrix}
j\\
-ij
\end{pmatrix}+\sum_j\mu_j\frac{1}{2}[\begin{pmatrix}
1\\
-j
\end{pmatrix}+\begin{pmatrix}
-1\\
j
\end{pmatrix}]=\sum_{i, j}\lambda_i\lambda_j\begin{pmatrix}
i\\
j
\end{pmatrix}+\sum_{j, k}\mu_j\mu_k\begin{pmatrix}
j\\
-k
\end{pmatrix},\\
\label{eq-b: n=1 case}\sum_{i, k}\lambda_i\mu_k\begin{pmatrix}
k\\
-ik
\end{pmatrix}=\sum_{i,j}\lambda_i\mu_j\begin{pmatrix}
i\\
j
\end{pmatrix}+\sum_{i, j}\lambda_i\mu_j\begin{pmatrix}
j\\
-i
\end{pmatrix}.
\end{align}
Next, note that
\begin{align*}
E((e_1+e_{-1})s)&=E(e_1s)+E(e_{-1}s)=E(e_1s)+\sigma_s(E(e_1s))\\
&=(a+bs)+\sigma_s(a+bs)
=(a+\sigma_s(a))+(b+\sigma_s(b))s=(b+\sigma_s(b))s.
\end{align*}
Meanwhile, since $b+\sigma_s(b)\in A_1\subseteq P$, we deduce that $(b+\sigma_s(b))s=E((e_1+e_{-1})s)=E(E((e_1+e_{-1})s))=E((b+\sigma_s(b))s)=(b+\sigma_s(b))E(s)=0$, i.e., $b+\sigma_s(b)=0$, equivalently, 
\begin{align}\label{eq: skew-symmetry on coefficients of b, case n=1}
\mu_k+\mu_{-k}=0, \forall ~k\in \mathbb{Z}.
\end{align}

Now we can compare the coefficients of $\begin{pmatrix}
1\\
j
\end{pmatrix}$ and $\begin{pmatrix}
-1\\
j
\end{pmatrix}$ respectively on both sides of  \eqref{eq-a: n=1 case} to deduce that
\begin{align*}
\lambda_1\lambda_{-j}+\frac{1}{2}\mu_{-j}&=\lambda_1\lambda_j+\mu_1\mu_{-j},\\
\lambda_j\lambda_{-1}+\frac{1}{2}\mu_j&=\lambda_{-1}\lambda_j+\mu_{-1}\mu_{-j}.
\end{align*}
Take the sum of the above two equations and apply \eqref{eq: skew-symmetry on the coefficients of a} and \eqref{eq: skew-symmetry on coefficients of b, case n=1}, we get that $2\lambda_1\lambda_{-j}=0$ for all $j\in\mathbb{Z}$. Hence $\lambda_1=0$. Then plugging it in the above first equation to get that $\frac{1}{2}\mu_{-j}=\mu_1\mu_{-j}$. Thus either $\mu_1=\frac{1}{2}$ or $\mu_{-j}=0$ for all $j\in \mathbb{Z}$. 

Once we have $\mu_j=0$ for all $j\in\mathbb{Z}$, i.e., $b=0$, then $a=E(e_1s)=E(E(e_1s))=E(a)\in P\cap L(\mathbb{Z}^2)'=P\cap L(\mathbb{Z}^2)=A_1$. Recall that \eqref{eq: skew-symmetry on the coefficients of a} holds, i.e., $\langle a, A_1 \rangle=0$. Hence $a=0$; equivalently, $E(e_1s)=0$.
Therefore, we may assume $\mu_1=\frac{1}{2}$ and try to deduce a contradiction.

We compute the coefficient of $\begin{pmatrix}
-1\\
j
\end{pmatrix}$ on both sides of \eqref{eq-b: n=1 case} to get that
$\lambda_j\mu_{-1}=\lambda_{-1}\mu_j+\lambda_{-j}\mu_{-1}$. By plugging $\lambda_{-1}=-\lambda_1=0$, $\mu_{-1}=-\mu_1=-\frac{1}{2}$ in it, we get $\lambda_j=0$ for all $j\in \mathbb{Z}$, i.e., $a=0$. 

Then for any $|i|\neq 0, 1$, we compute the coefficients of $\begin{pmatrix}
i\\
j
\end{pmatrix}$ on both sides of \eqref{eq-b: n=1 case} to get that
$0=\lambda_i\lambda_j+\mu_i\mu_{-j}=\mu_i\mu_{-j}$. Therefore, $\mu_i=0$ for all $|i|\neq 0, 1$. Recall that $\mu_0+\mu_{-0}=0$, i.e., $\mu_0=0$, hence, we have shown that
$E(e_1s)=0+\frac{1}{2}(\begin{pmatrix}
1\\
0
\end{pmatrix}-\begin{pmatrix}
-1\\
0
\end{pmatrix})s$.

Then, $E(\begin{pmatrix}
3\\
0
\end{pmatrix}s)=\begin{pmatrix}
1\\
0
\end{pmatrix}E(e_1s)\begin{pmatrix}
-1\\
0
\end{pmatrix}=\frac{1}{2}(\begin{pmatrix}
3\\
0
\end{pmatrix}-\begin{pmatrix}
1\\
0
\end{pmatrix})s$.
Hence, 
\begin{align*}
P\ni E(\begin{pmatrix}
3\\
0
\end{pmatrix}s)E(e_1s)&=\frac{1}{4}[\begin{pmatrix}
3\\
0
\end{pmatrix}-\begin{pmatrix}
1\\
0
\end{pmatrix}][\begin{pmatrix}
-1\\
0
\end{pmatrix}-\begin{pmatrix}
1\\
0
\end{pmatrix}]\\
&=\frac{1}{4}[\begin{pmatrix}
2\\
0
\end{pmatrix}-1-\begin{pmatrix}
4\\
0
\end{pmatrix}+\begin{pmatrix}
2\\
0
\end{pmatrix}]\in L(\mathbb{Z}^2)\setminus A_1.
\end{align*}
This contradicts to the assumption that $P\cap L(\mathbb{Z}^2)=A_1$.

\textbf{Subsubcase II}: $n\geq 2$.

In this case, $\begin{pmatrix}
1\\
-j
\end{pmatrix}\not\in n\mathbb{Z}^2$ and hence $ E(\begin{pmatrix}
1\\
-j
\end{pmatrix})=0$, thus, we get the following identities by comparing the computation of $E(e_1s)E(f_1s)$ and $E(e_1sE(f_1s))$:
\begin{align}
\label{eq1: two sides, n>1} \sum_{i, j}\lambda_i\lambda_j\begin{pmatrix}
ni\\
nj
\end{pmatrix}+\sum_{j, k}\mu_j\mu_k\begin{pmatrix}
j\\
-k
\end{pmatrix}=\sum_{i,j}\lambda_i\lambda_j\begin{pmatrix}
nj\\
-n^2ij
\end{pmatrix},\\
\label{eq2: two sides, n>1}\sum_{i,j}\lambda_i\mu_j\begin{pmatrix}
ni\\
j
\end{pmatrix}+\sum_{i, j}\lambda_i\mu_j\begin{pmatrix}
j\\
-ni
\end{pmatrix}=\sum_{i, k}\lambda_i\mu_k\begin{pmatrix}
k\\
-nik
\end{pmatrix}.
\end{align}

By \eqref{eq1: two sides, n>1}, we deduce that $\mu_j\mu_k=0$ for all $(j,-k)^t\not\in n\mathbb{Z}^2$. In particular, $\mu_j=0$ for all $j\not\in n\mathbb{Z}$, hence, $b\in L((n\mathbb{Z}, 0)^t)$.

Next, note that
\begin{align*}
E((e_1+e_{-1})s)=E(e_1s)+E(e_{-1}s)=(a+bs)+\sigma_s(a+bs)\\
=(a+\sigma_s(a))+(b+\sigma_s(b))s=(b+\sigma_s(b))s.
\end{align*}
Meanwhile, since $b+\sigma_s(b)\in A_n\subseteq P$, we deduce that $(b+\sigma_s(b))s=E((e_1+e_{-1})s)=E(E((e_1+e_{-1})s))=E((b+\sigma_s(b))s)=(b+\sigma_s(b))E(s)=0$, i.e., $b+\sigma_s(b)=0$, equivalently, 
\begin{align}\label{eq: skew-symmetry on coefficients of b}
\mu_k+\mu_{-k}=0, \forall ~k\in \mathbb{Z}.
\end{align}

For any $i\neq 0$ and $j\in\mathbb{Z}$, by comparing the coefficients of $\begin{pmatrix}
ni\\
nj
\end{pmatrix}$ on both sides of the two identities \eqref{eq1: two sides, n>1} and \eqref{eq2: two sides, n>1}, we deduce that 
\begin{align}
\label{eq11: simplied eq1} \lambda_i\lambda_j+\mu_{ni}\mu_{-nj}&=\lambda_{-\frac{j}{ni}}\lambda_i, \forall~ i\neq 0, \forall ~j\\
\label{eq22: simplied eq2} \lambda_i\mu_{nj}+\lambda_{-j}\mu_{ni}&=\lambda_{-\frac{j}{ni}}\mu_{ni}, \forall~i\neq 0,\forall~j.
\end{align}
Here, $\lambda_{-\frac{j}{ni}}$ is understood as 0 if $(ni)\nmid j$.

Substitute $j=1$ into \eqref{eq11: simplied eq1} and 
\eqref{eq22: simplied eq2} and use \eqref{eq: skew-symmetry on coefficients of b} to deduce that 
\begin{align}
\label{eq3: j=1 case} \lambda_i\lambda_1=\mu_{ni}\mu_n, \forall~ i\neq 0,\\
\label{eq4: j=1 case} \lambda_i\mu_n=\lambda_1\mu_{ni},\forall~i\neq 0.
\end{align}
This implies that $(\lambda_i^2-\mu_{ni}^2)\lambda_1\mu_n=0$ and $\lambda_1^2=\mu_n^2$ (by plugging $i=1$ in \eqref{eq3: j=1 case}). 

By plugging $j=nik$ in \eqref{eq11: simplied eq1} and \eqref{eq22: simplied eq2} and using \eqref{eq: skew-symmetry on the coefficients of a}, we get that
\begin{align}
\label{eq5} \lambda_i\lambda_{nik}-\mu_{ni}\mu_{n^2ik}=\lambda_{-k}\lambda_i, \forall ~i\neq 0, \forall~ k, \\
\label{eq6} \lambda_i\mu_{n^2ik}-\lambda_{nik}\mu_{ni}=\lambda_{-k}\mu_{ni},\forall~i\neq 0, \forall~k.
\end{align}

Claim 1: $\lambda_1\mu_n=0$; equivalently, $\lambda_1=0=\mu_n$ (since $\lambda_1^2=\mu_n^2$). 

Indeed, assume not, i.e., $\lambda_1=\pm \mu_n\neq 0$. We can deduce from \eqref{eq3: j=1 case} that $\lambda_i=\pm \mu_{ni}$. In both cases, we can deduce from \eqref{eq5} that $\lambda_{-k}\lambda_i=0$ for all $i\neq 0$ and $k\in\mathbb{Z}$. In particular, $\lambda_i=0$ for all $i\neq 0$, contradicting to the assumption that $\lambda_1\neq 0$.

Claim 2: $\lambda_i=\mu_{ni}=0$ for all $i\neq 0$.

First, let us check that $\lambda_i-\mu_{ni}=0$ for all $i\neq 0$. Assume this does not hold, then for some $i\neq 0$, we have $\lambda_i-\mu_{ni}\neq 0$. By setting $j=i$ in \eqref{eq11: simplied eq1}, we conclude that $\lambda_i^2-\mu_{ni}^2=0$. Hence, we have $\lambda_i+\mu_{ni}=0$ and thus $\lambda_i=-\mu_{ni}$. Then using this relation, we may deduce from \eqref{eq5} and \eqref{eq6} that
$\lambda_i(\lambda_{nik}+\mu_{n^2ik})=\lambda_{-k}\lambda_i$ and $\lambda_i(\lambda_{nik}+\mu_{n^2ik})=-\lambda_{-k}\lambda_i$. Thus
$0=\lambda_{-k}\lambda_i$ for all $k\in\mathbb{Z}$, thus $\lambda_i=0$ and $\mu_{ni}=-\lambda_i=0$, contradicting to our assumption that $\lambda_i-\mu_{ni}\neq 0$. Hence, we have proved that $\lambda_i=\mu_{ni}$ for all $i\neq 0$. Then, it follows from \eqref{eq5} that $\lambda_{-k}\lambda_i=0$ for all $i\neq 0$, thus $\lambda_i=0$ for all $i\neq 0$.

Based on Claim 2 and the fact that $\mu_j=0$ for all $j\not\in n\mathbb{Z}$, we deduce that $a, b\in\mathbb{C}$, then by taking trace on $E(e_1s)=a+bs$, we get $a=0$. By taking $E$ on $E(e_1s)=bs$, we get $bs=E(bs)=bE(s)=0$, i.e., $b=0$, and hence $E(e_1s)=0$. 
\end{proof}
This finishes the proof of Proposition \ref{prop: version of thm} and hence also of Theorem \ref{thm}.
\end{proof}

Finally, let us prove Corollary \ref{cor}.
\begin{proof}[Proof of Corollary \ref{cor}]
Let $P$ be a $G$-invariant von Neumann subalgebra in $L(G)$ with the Haagerup property. Then $P=A_n$ for some $n\geq 0$ or $L(H)$ for some normal subgroup $H\lhd G$ with the Haagerup property by Theorem \ref{thm}. By \cite[Proposition 2.10]{jiangskalski}, we know that $H\subseteq \mathbb{Z}^2\rtimes \{\pm I_2\}$, where $I_2$ denotes the identity matrix in $SL_2(\mathbb{Z})$. Therefore, $P\subseteq L(\mathbb{Z}^2\rtimes \{\pm I_2\})$ in both cases.
Notice that $L(\mathbb{Z}^2\rtimes \{\pm I_2\})$ is clearly $G$-invariant and has Haagerup property and hence it is the maximal one with these properties.
\end{proof}

\section{Remarks and open questions}\label{section: last remark}

In this section, we record some remarks on the strategy used in this paper and open questions which might worth further investigating.

Note that $G=\mathbb{Z}\wr F_2$ is also a bi-exact group by \cite[Corollary 15.3.9]{bo} with amenable radical being $\oplus_{F_2}\mathbb{Z}$, hence for a  $G$-invariant von Neumann subaglebra $P$ in $L(G)$, we can also apply the same strategy as used in this paper to deduce that either $P=L(H)$ for some non-amenable normal subgroup $H\subseteq G$ or $P\subseteq L(\oplus_{F_2}\mathbb{Z})$ is $F_2$-invariant; equivalently, $P=L^{\infty}(Y,\nu)$ for some quotient action $F_2\curvearrowright (Y,\nu)$ of the Bernoulli shift $F_2\curvearrowright (\mathbb{T}^{F_2},\mu^{F_2})$. However, it seems impossible to explicitly describe all the quotient actions $F_2\curvearrowright (Y,\nu)$.

Let us list some open questions related to this paper.

The first is to identify more icc groups $G$ without the ISR property such that all $G$-invariant von Neumann subalgebras in $L(G)$ can be classified. In particular, we mention the following questions.

\begin{question}[suggested by Amrutam]
Let $n\geq 3$. Set $G=\mathbb{Z}^n\rtimes SL_n(\mathbb{Z})$ and $M=L(G)$. Classify all $G$-invariant von Neumann subalgebras in $M$.
\end{question}
\begin{question}
In \cite{val}, Valette studied a very nice generalization of the group $\mathbb{Z}^2\rtimes GL_2(\mathbb{Z})=:G_1$ to $G_n:=\mathbb{Z}^{n+1}\rtimes_{\rho_n}GL_2(\mathbb{Z})$ and proved that all $G_n$'s have similar classifications on maximal Haagerup subgroups. Classify all $G_n$-invariant von Neumann subalgebras in $L(G_n)$ for all $n\geq 2$.
\end{question}

Second, we may consider a general question on classifying all $H$-invariant von Neumann subalgebras in a tracial von Neumann algebra $(M,\tau)$ for a suitable choice of $M$ and a countable subgroup $H\subseteq \mathcal{U}(M)$. In the same spirit of \cite[Conjecture]{ah}, we ask the following questions. We remark that in all these questions intermediate von Neumann subalgebras between $L(G)$ and $M$ are classified by \cites{cd, wit, jiang_jot}.
\begin{question}
Let $G=F_2$ the non-abelian free group on two generators and $\{G_n\}_{n\geq 1}$ be a sequence of decreasing normal subgroups in $G$ with trivial intersection. Consider the associated profinite action $G\curvearrowright X:=\lim\limits_{\leftarrow}G/G_n$. Classify all $G$-invariant von Neumann subalgebras in $M=L^{\infty}(X)\rtimes G$. 
\end{question}
\begin{question}
Let $\Gamma=\mathbb{Z}^2\rtimes_{\rho} G$, where $\rho: G=\mathbb{Z}^2\rtimes SL_2(\mathbb{Z})\rightarrow Aut(\mathbb{Z}^2)$ is the composition of the quotient map  $\mathbb{Z}^2\rtimes SL_2(\mathbb{Z})\twoheadrightarrow SL_2(\mathbb{Z})$ with the inclusion $SL_2(\mathbb{Z})\hookrightarrow Aut(\mathbb{Z}^2)$. Classify all $G$-invariant von Neumann subalgebras in $M=L(\Gamma)$.
\end{question}

\begin{question}
Let $M=L^{\infty}(X,\mu)\rtimes G$, where $G:=PSL_2(\mathbb{Z})\curvearrowright (X,\mu)$ (as studied in \cite{jiang_jot}) denotes the quotient of $SL_2(\mathbb{Z})\curvearrowright \mathbb{T}^2$ by modding out the central subgroup action $\{\pm id\}\curvearrowright \mathbb{T}^2$ and then the kernel of the action. Is every $G$-invariant von Neumann subalgebra in $M$  of the form $L^{\infty}(Y,\nu)\rtimes H$ for some normal subgroup $H\lhd PSL_2(\mathbb{Z})$ and a quotient action $PSL_2(\mathbb{Z})\curvearrowright (Y,\nu)$ of the action $PSL_2(\mathbb{Z})\curvearrowright (X,\mu)$?
\end{question}

\end{document}